\newtheorem{Theorem}{Theorem}
\newtheorem{Proposition}{Proposition}
\newtheorem{Corollary}{Corollary}
\newcommand{\Prob}{{\mathsf{P}}}
\newcommand{\Ha}{{\mathfrak{H}}}
\newcommand \SINE {{\mathscr S}}
\newcommand \Conf {{\mathrm {Conf}}}
\DeclareMathOperator{\im}{Im}
\begin{document}
\title[The speed of convergence in Soshnikov's Central Limit Theorem]{The speed of Convergence with respect\\to the Kolmogorov---Smirnov Metric\\ 
	in the Soshnikov Central Limit Theorem\\ for the Sine Process}

\author[A. I. Bufetov]{Alexander I. Bufetov}

\address{Steklov Mathematical Institute of the Russian Academy of Sciences,\newline\hspace*{\parindent}Moscow,\newline
\hspace*{\parindent}St. Petersburg State University, St. Petersburg,\newline
\hspace*{\parindent}Centre national de la recherche scientifique}

\email{bufetov@mi-ras.ru}

\date{}
%%\date{13 August 2024}
%\datedor{26 ноября 2023 г.}
%\dateprin{26 ноября 2023 г.}

%\doi{10.4213/faa9999}

%%\dedicatory{}

\begin{abstract}
	For  rescaled  additive functionals of the sine process, upper bounds are obtained for their speed of convergence to the Gaussian distribution with respect to the Kolmogorov---Smirnov metric. Under scaling with coefficient $R$ the Kolmogorov---Smirnov distance is bounded from above by $c/\log R$ for a smooth function and by $c/R$ for a function holomorphic in a horizontal strip.	
\end{abstract}

\thanks{
	The research was financially supported by
	the Ministry of Science and Higher Education of the Russian Federation in the framework of a scientific project under agreement No. 075-15-2024-631, and also by the grant 24-7-1-21 of the  "BASIS" Foundation for the Advancement of Mathematics and Theoretical Physics.
}

%\markboth{Speed of convergence in Soshnikov's Central Limit Theorem}{A.\,I.~Bufetov}

\maketitle

%%\begin{fulltext}

%%\subjclass{60F05}

%%\begin{keywords}
%%	sine process, Kolmogorov---Smirnov distance, Esseen inequality,\\ 
%%	Borodin---Okounkov---Jeronimo---Case formula, Hardy space in strip
%%\end{keywords}
%%% For English vesrion:
%\begin{keywords}
%слова, слова, слова, и все, как одно, ключевые
%\end{keywords}

Let $\Prob_\SINE$ be the sine process, the determinantal measure on the space $\Conf(\mathbb{R})$ of configurations on $\mathbb{R}$ corresponding to the sine kernel
\begin{equation*}
	\SINE(x,y)=\frac{\sin\pi(x-y)}{\pi(x-y)}.
\end{equation*}
For a Borel function $f\colon\mathbb{R}\to\mathbb{R}$ having compact support set
\begin{equation*}
	S_f(X)=\sum_{x\in X}f(x)
\end{equation*}
to be the additive functional on the space $\Conf(\mathbb{R})$, corresponding to the function~$f$. If $f\in L_1(\mathbb{R})$ then $S_f\in L_1(\Conf(\mathbb{R}),\Prob_\SINE)$ and
\begin{equation*}
	\mathbb{E}_{\Prob_\SINE}S_f=\int_{\mathbb{R}}f(x)\,dx.	
\end{equation*}
Denote $\overline{S}_f=S_f-\mathbb{E}_{\Prob_\SINE}S_f$ and recall that
\begin{equation}\label{eq:VarSf}
	\mathbb{E}|\overline{S}_f|^2=\frac{1}{2}\iint |f(x)-f(y)|^2\cdot |\SINE(x,y)|^2\,dxdy.
\end{equation}
The equality \eqref{eq:VarSf} allows one to extend the definition of $\overline{S}_f$ by continuity to all functions~$f$ such that the integral in the right-hand side of~\eqref{eq:VarSf} converges. Let $f(\,\cdot\,/R)$ stand for the function whose value at the point $t$ equals $f(t/R)$. Define the space $\dot H_{1/2}(\mathbb{R})$ as the completion of the space of smooth compactly supported functions on $\mathbb{R}$ with respect to the norm $\|f\|_{\dot H_{1/2}}$, defined by the formula
\begin{equation*}
	\|f\|_{\dot H_{1/2}}^2=\int_{\mathbb{R}}|\xi|\cdot |\widehat{f}(\xi)|^2\,d\xi.
\end{equation*}
For a function $f\in \dot H_{1/2}$ denote by the symbols $f_+,f_-$ the functions defined by the formulas $\widehat{f_+}=\widehat{f}\cdot\chi_{[0,+\infty)}$, $\widehat{f_-}=\widehat{f}\cdot\chi_{(-\infty,0)}$. Set also $\mathring{f}=f_--f_+$.

\begin{Theorem}\label{thm:Soshnikov}
	If $f\in\dot H_{1/2}(\mathbb{R})$ and $\mathring{f}\in L_\infty(\mathbb{R})$ then the family of additive functionals $\overline{S}_{f(\,\cdot\,/R)}$ converges in distribution to the Gaussian distribution $\mathcal{N}(0,\|f\|_{\dot H_{1/2}}^2/4\pi^2)$.
\end{Theorem}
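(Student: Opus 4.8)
The plan is to prove the theorem by the method of cumulants, using that $\Prob_\SINE$ is determinantal. The sine kernel is the integral kernel of the orthogonal projection $\Pi$ of $L^2(\mathbb{R})$ onto a bounded band of frequencies, so for the rescaled function $f_R:=f(\,\cdot\,/R)$ the characteristic functional of the additive functional is a Fredholm determinant,
\[
\mathbb{E}_{\Prob_\SINE}\exp\bigl(it\,\overline{S}_{f_R}\bigr)=\exp\!\bigl(-it\,\mathbb{E}S_{f_R}\bigr)\,\det\bigl(I+(e^{itf_R}-1)\Pi\bigr),
\]
and hence its logarithm is the cumulant generating function. By Lévy's continuity theorem it suffices to show, for each fixed real $t$, that this logarithm converges to $-t^2\|f\|_{\dot H_{1/2}}^2/(8\pi^2)$; equivalently, that the first cumulant produces exactly the centering, that the second cumulant (the variance) converges to $\|f\|_{\dot H_{1/2}}^2/(4\pi^2)$, and that every cumulant of order $\ge3$ tends to $0$. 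Since shifting by the mean affects only the first cumulant, I may work with $S_{f_R}$ for the cumulants of order $\ge2$.

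The variance I would compute directly from \eqref{eq:VarSf}. Rescaling $x\mapsto Rx$, $y\mapsto Ry$ turns $|\SINE(Rx,Ry)|^2$ into $\sin^2\!\bigl(\pi R(x-y)\bigr)/\bigl(\pi(x-y)\bigr)^2$; passing to Fourier variables (the Fourier transform of $|\SINE|^2$ being the triangular function supported on the doubled band) expresses the variance, up to the normalization of $\widehat{\,\cdot\,}$, as $\int_{\mathbb{R}}\min(|\xi|,\varrho R)\,|\widehat{f}(\xi)|^2\,d\xi$ for a fixed $\varrho>0$. This integrand increases pointwise to $|\xi|\,|\widehat{f}(\xi)|^2$, which is integrable precisely because $f\in\dot H_{1/2}$; monotone convergence then gives the limit $\|f\|_{\dot H_{1/2}}^2/(4\pi^2)$, the variance of the target Gaussian. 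It is worth recording the operator reading of this step: with $M_g$ denoting multiplication by $g$, the commutator $[\Pi,M_{f_R}]$ has kernel $\SINE(x,y)\bigl(f_R(y)-f_R(x)\bigr)$, so \eqref{eq:VarSf} says exactly that $\mathbb{E}|\overline{S}_{f_R}|^2=\tfrac12\|[\Pi,M_{f_R}]\|_{\mathrm{HS}}^2$, a quantity that stays bounded in $R$.

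The heart of the matter, and the step I expect to be the main obstacle, is the vanishing of the higher cumulants. Expanding $e^{itf_R}-1$ in powers of $t$ and $\mathrm{Tr}\log(I+\cdot)$ in powers of the operator, the $k$-th cumulant becomes a finite signed combination of traces $\mathrm{Tr}\bigl(M_{f_R}^{\,j_1}\Pi\cdots M_{f_R}^{\,j_m}\Pi\bigr)$ with $j_1+\dots+j_m=k$. Writing $M_{f_R}$ in blocks relative to $\Pi$ and using $\Pi^2=\Pi$, the purely diagonal contributions cancel: for $k=3$ the combinatorial coefficients already sum to $1-3+2=0$ and the term $\mathrm{Tr}\bigl((\Pi M_{f_R}\Pi)^3\bigr)$ drops out, leaving only traces that contain at least two off-diagonal blocks $\Pi M_{f_R}(1-\Pi)$ or $(1-\Pi)M_{f_R}\Pi$, that is, factors of the commutator $[\Pi,M_{f_R}]$. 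Its kernel $\SINE(x,y)\bigl(f_R(y)-f_R(x)\bigr)$ carries a difference that is of order $1/R$ on the scale of the correlation length, while the second cumulant shows that its Hilbert--Schmidt mass is only $O(1)$; the plan is to leverage this, for each $k\ge3$, into an estimate that forces the surviving off-diagonal traces to tend to $0$ — a strong-Szegő-type cancellation in which only the quadratic term of the determinant survives.

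The genuinely delicate point is that $\dot H_{1/2}\not\subset L_\infty$, so $M_{f_R}$ need not be bounded and $e^{itf_R}$ need not be a well-behaved multiplier; this is exactly where the hypothesis $\mathring{f}\in L_\infty$ enters. Decomposing $f=f_++f_-$ into its positive- and negative-frequency parts, the off-diagonal blocks above are governed near each edge of the band by a Hankel-type operator whose symbol is, up to sign, $\mathring{f}$ (equivalently, a multiple of the Hilbert transform of $f$); the bound $\mathring{f}\in L_\infty$ guarantees that these edge operators are bounded, uniformly in $R$, and thereby controls precisely the part of $[\Pi,M_{f_R}]$ that the $\dot H_{1/2}$-norm alone does not see. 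Granting the two estimates supplied by the hypotheses — the Hilbert--Schmidt bound from $f\in\dot H_{1/2}$ and the uniform operator control of the edge terms from $\mathring{f}\in L_\infty$ — the higher cumulants vanish, the cumulant generating function converges to that of $\mathcal{N}(0,\|f\|_{\dot H_{1/2}}^2/4\pi^2)$, and Lévy's theorem yields the asserted convergence in distribution.
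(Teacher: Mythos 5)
Your proposal has a genuine gap at exactly the step you yourself call ``the heart of the matter'': the vanishing of the cumulants of order $\ge 3$. What you offer there is not an argument but a plan --- ``the plan is to leverage this \dots\ into an estimate that forces the surviving off-diagonal traces to tend to $0$'' --- and the conclusion is then reached only by ``granting'' the needed estimates. The heuristic you give cannot be upgraded to a proof as stated: knowing that $\|[\Pi,M_{f_R}]\|_{HS}$ is $O(1)$ yields $O(1)$ bounds, not $o(1)$ bounds, on the traces containing two or more off-diagonal blocks; in fact, by the very identity you record, $\|[\Pi,M_{f_R}]\|_{HS}^2=2\,\mathbb{E}|\overline{S}_{f_R}|^2$ converges to the \emph{nonzero} limit $\|f\|_{\dot H_{1/2}}^2/2\pi^2$, so nothing tends to zero for free. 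The vanishing of the third and higher cumulants rests on a genuine strong-Szeg\H{o}-type cancellation \emph{between} the surviving traces, and organizing that cancellation is precisely the hard combinatorial content of Soshnikov's original proof --- which, as the paper notes, only covered a more restrictive class of $f$. How the hypothesis $\mathring{f}\in L_\infty$ enters your cumulant bounds is likewise asserted (``the edge operators are bounded, uniformly in $R$'') rather than derived, and you would additionally need to justify the determinant formula and the trace expansions for an $f$ that is neither bounded nor compactly supported: $(e^{itf_R}-1)\Pi$ need not be trace class, so regularized determinants and an approximation argument from smooth compactly supported functions to the completion $\dot H_{1/2}$ are required, with uniformity in $R$.

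For contrast, the paper avoids cumulants altogether. It invokes the scaling limit of the Borodin---Okounkov---Geronimo---Case identity,
\begin{equation*}
\mathbb{E}_{\Prob_\SINE}\exp(\lambda\overline{S}_f)=
\exp\biggl(\frac{\lambda^2\|f\|_{\dot H_{1/2}}^2}{4\pi^2}\biggr)\cdot V_f(\lambda),
\end{equation*}
in which the Gaussian factor is \emph{exact} for every admissible $f$ and every $R$, and the whole theorem reduces to showing $V_{f(\,\cdot\,/R)}(\lambda)\to 1$. That, in turn, follows from the explicit computation $\|\chi_{(1,+\infty)}\Ha(h)\|_{HS}=\|h\|_{\mathcal{H}(1)}$ together with $\|h\|_{\mathcal{H}(R)}=\|h(\,\cdot\,/R)\|_{\mathcal{H}(1)}$: the hypotheses give $\mathring{f}\in\dot H_{1/2}\cap L_\infty$ (note $\|\mathring{f}\|_{\dot H_{1/2}}=\|f\|_{\dot H_{1/2}}$), hence $\exp(\pm\lambda\mathring{f})\in\dot H_{1/2}$, hence the relevant Hankel operators are Hilbert--Schmidt with norms tending to $0$ as $R\to\infty$. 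In other words, the order-by-order cancellation your route must produce is packaged once and for all into the exact identity; that is what makes the paper's proof short and what lets it reach the full class $f\in\dot H_{1/2}$, $\mathring{f}\in L_\infty$. To complete your route you would essentially have to redo Soshnikov's combinatorics and then push it beyond his hypotheses --- the very difficulty the identity is designed to circumvent.
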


Theorem~\ref{thm:Soshnikov} has been proved by A.~Soshnikov~\cite{Soshnikov} under somewhat more restrictive conditions on the function~$f$. Our next aim is to establish the speed of convergence in Theorem~\ref{thm:Soshnikov}.
Recall that the Kolmogorov---Smirnov distance $d_{KS}$ between two probability measures on $\mathbb{R}$ is defined as the supremum of the difference of their distribution functions. 
With a slight abuse of notation, we shall write, for example, $d_{KS}(Y,\mathcal{N}(0,1))$ to denote the Kolmogorov---Smirnov distance from the distribution of the random variable $Y$ to the standard Gaussian distribution.

Denote by the symbol $\dot H_1(\mathbb{R})$ the completion of the space compactly supported smooth functions with respect to the norm $\|f\|_{\dot H_1}$ defined by the formula
\begin{equation*}
	\|f\|_{\dot H_{1/2}}^2=\int_{\mathbb{R}}|\xi|^2\cdot |\widehat{f}(\xi)|^2\,d\xi=
	\int_{\mathbb{R}}|f'(t)|^2\,dt.
\end{equation*}

\begin{Theorem}[\cite{Bufetov-arXiv}]\label{thm:convrate-H1/2}
	Assume that $f\in\dot H_{1/2}(\mathbb{R})\cap \dot H_1(\mathbb{R})$ and $\mathring{f}\in L_\infty(\mathbb{R})$. Then there exists a constant $c(f)$ depending only on $\|f\|_{\dot H_{1/2}}$, $\|f\|_{\dot H_1}$, and $\|\mathring{f}\|_{L_\infty}$ such that for all $R>1$ we have
	\begin{equation*}
		d_{KS}\Bigl(\overline{S}_{f(\,\cdot\,/R)}, \mathcal{N}\bigl(0,\|f\|_{\dot H_{1/2}}^2/4\pi^2\bigr)\Bigr)\le \frac{c(f)}{\log R}.
	\end{equation*}
\end{Theorem}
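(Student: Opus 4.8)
\emph{The plan is to compare characteristic functions through the Esseen smoothing inequality.} Set $\phi_R(t)=\mathbb{E}_{\Prob_\SINE}\exp(it\,\overline{S}_{f(\,\cdot\,/R)})$ and let $\phi_Z(t)=\exp(-\sigma^2t^2/2)$, with $\sigma^2=\|f\|_{\dot H_{1/2}}^2/4\pi^2$, be the characteristic function of the limiting Gaussian. For every $T>0$,
\[ d_{KS}\bigl(\overline{S}_{f(\,\cdot\,/R)},\mathcal{N}(0,\sigma^2)\bigr)\le\frac1\pi\int_{-T}^{T}\frac{|\phi_R(t)-\phi_Z(t)|}{|t|}\,dt+\frac{c_0}{T}, \]
and I would take $T$ as large as the estimates permit. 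Since $\SINE$ is the kernel of the orthogonal projection $P$ onto the frequency band $[-\pi,\pi]$, the functional has, for every real $t$, the Fredholm-determinant characteristic function $\phi_R(t)=\exp\bigl(-it\int_{\mathbb R}f(x/R)\,dx\bigr)\det\bigl(I+(e^{itf(\,\cdot\,/R)}-1)P\bigr)$, an entire function of $t$ whose logarithm near $t=0$ equals $\sum_{m\ge2}(it)^mC_m^{(R)}/m!$, where $C_m^{(R)}$ is the $m$-th cumulant of $\overline{S}_{f(\,\cdot\,/R)}$.

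The quadratic term is harmless. Expanding \eqref{eq:VarSf} in Fourier and using $\widehat{f(\,\cdot\,/R)}(\xi)=R\widehat f(R\xi)$ gives $C_2^{(R)}=\sigma_R^2=\frac1{2\pi}\int_{\mathbb R}|\widehat{f(\,\cdot\,/R)}(\xi)|^2\min(|\xi|/2\pi,1)\,d\xi$. Scale invariance of the $\dot H_{1/2}$ norm yields $\sigma_R^2\to\sigma^2$, and since the two integrands differ only for $|\xi|>2\pi$, the hypothesis $f\in\dot H_1$ bounds the difference by $|\sigma_R^2-\sigma^2|\le c\,\|f\|_{\dot H_1}^2/R$. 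Hence both the replacement of $\sigma_R^2$ by $\sigma^2$ in the exponent and the Gaussian--Gaussian distance $d_{KS}(\mathcal N(0,\sigma_R^2),\mathcal N(0,\sigma^2))$ contribute only $O(1/R)$, which is negligible against the target $1/\log R$.

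The heart of the matter is the control of the cumulants of order $m\ge3$. Expanding the trace--logarithm, each $C_m^{(R)}$ is a finite sum of traces $\mathrm{Tr}(M_{f(\,\cdot\,/R)}^{a_1}PM_{f(\,\cdot\,/R)}^{a_2}P\cdots)$ with $\sum_ja_j=m$, where $M_g$ is multiplication by $g$. Passing to the Fourier side, each trace becomes an integral of $\prod_j\widehat{f(\,\cdot\,/R)}(\xi_j)$ over $\sum_j\xi_j=0$ against a symbol built from the band indicator; a direct computation for $m=3$ shows that the symmetrised symbol is $\frac1{2\pi}\bigl(|\xi_1|+|\xi_2|+|\xi_3|-2\max_j|\xi_j|\bigr)$, which vanishes identically on $\sum_j\xi_j=0$ as long as all frequencies stay in $(-2\pi,2\pi)$, and I expect the same bulk cancellation for every $m\ge3$. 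Thus the whole non-Gaussian part of $C_m^{(R)}$ is carried by configurations with frequencies of order $\pi$, i.e. reaching the edges of the band $[-\pi,\pi]$; since $\widehat{f(\,\cdot\,/R)}(\xi)=R\widehat f(R\xi)$ places only polynomially small mass there when $f\in\dot H_1$, while the number of terms and the operator norms of the off-diagonal (Hankel) blocks $PM_{f(\,\cdot\,/R)}(I-P)$ are governed by $\|\mathring f\|_{L_\infty}$ (scale invariant, being essentially the conjugate function of $f$), I would establish a bound $|C_m^{(R)}|\le R^{-\gamma}K^m$, uniformly in $m\ge3$, with $\gamma>0$ and $K$ depending only on $\|f\|_{\dot H_1}$ and $\|\mathring f\|_{L_\infty}$.

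Such a bound makes $\mathcal E(t):=\sum_{m\ge3}(it)^mC_m^{(R)}/m!$ an entire function with $|\mathcal E(t)|\le R^{-\gamma}e^{K|t|}$; as it agrees with $\log\phi_R(t)+\tfrac{t^2}2\sigma_R^2$ near $0$, analytic continuation gives $\phi_R(t)=\exp(-\tfrac{t^2}2\sigma_R^2+\mathcal E(t))$ for all real $t$, so $\phi_R$ never vanishes. On the range $|t|\le T:=\tfrac{\gamma}{2K}\log R$ one has $|\mathcal E(t)|\le R^{-\gamma/2}$, whence $|\phi_R(t)-\phi_Z(t)|\le c\,e^{-\sigma^2t^2/2}(R^{-\gamma/2}+t^2/R)$; the integral in the smoothing inequality is then $O(R^{-\gamma/2})$, while the tail term $c_0/T$ is $O(1/\log R)$ and dominates, which produces the asserted estimate. \emph{The main obstacle is precisely the uniform-in-$m$ cumulant bound}: one must extract genuine polynomial-in-$R$ smallness from the Fermi-edge localisation while keeping the growth in $m$ geometric, controlled by $\|\mathring f\|_{L_\infty}$, so that the cumulant series stays summable and small up to $|t|\sim\log R$. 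It is this maximal admissible cutoff $T\sim\log R$, not the size of any single cumulant, that fixes the rate; the logarithm is the signature of the competition between the polynomial smallness $R^{-\gamma}$ and the geometric factor $K^m$.
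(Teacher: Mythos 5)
Your overall framework---Esseen smoothing with a cutoff $T\sim\log R$, so that the $c_0/T$ term dominates and produces the rate $1/\log R$---is exactly the skeleton of the paper's argument (its Corollary~\ref{cor:dKS}), and your observation that the quadratic/variance discrepancy is only $O(1/R)$ is correct. But the proof has a genuine gap at precisely the point you flag yourself: the uniform-in-$m$ cumulant bound $|C_m^{(R)}|\le R^{-\gamma}K^m$, $m\ge 3$, with $K$ controlled only by $\|f\|_{\dot H_1}$ and $\|\mathring f\|_{L_\infty}$, is asserted (``I expect the same bulk cancellation'', ``I would establish a bound''), not proved, and it is the entire technical content of the theorem. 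The difficulty is not rhetorical: each individual trace $\mathrm{Tr}(M_{f(\cdot/R)}^{a_1}P\cdots M_{f(\cdot/R)}^{a_k}P)$ in the expansion of $\log\det$ is \emph{not} small in $R$; only their alternating combination is, via Soshnikov-type combinatorial cancellations, and turning that cancellation into a quantitative estimate that is simultaneously polynomially small in $R$ and geometrically bounded in $m$ (so that the series is summable up to $|t|\sim\log R$) under your weak regularity hypotheses is a substantial piece of work that your text does not supply. Your $m=3$ computation, while correct, does not indicate how the bookkeeping survives for general $m$, where the number of compositions and the structure of the truncated symbols grow rapidly.

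The paper circumvents this problem entirely, and it is worth seeing how. Instead of expanding $\log\phi_R$ in cumulants, it invokes the scaling limit of the Borodin--Okounkov--Geronimo--Case formula: the Laplace transform factorizes \emph{exactly} as $\mathbb{E}\exp(\lambda\overline S_f)=\exp\bigl(\lambda^2\|f\|_{\dot H_{1/2}}^2/4\pi^2\bigr)V_f(\lambda)$, where $V_f(\lambda)$ is a single Fredholm determinant of a product of Hankel operators with symbols $\exp\bigl(\pm\lambda\mathring f(\cdot/2\pi)\bigr)$. The entire non-Gaussian correction (your $\mathcal E$) is thus $\log V_f$, and its smallness is obtained in one stroke: the Hilbert--Schmidt norm identity $\|\chi_{(1,+\infty)}\Ha(h)\|_{HS}=\|h\|_{\mathcal H(1)}$, the scaling $\|h\|_{\mathcal H(R)}=\|h(\cdot/R)\|_{\mathcal H(1)}$, the elementary bound $\|h\|_{\mathcal H(R)}\le\|h\|_{\dot H_1}/\sqrt R$, and the algebra property $\|\exp(g)\|_{\dot H_1\cap L_\infty}\le(1+\|g\|_{\dot H_1\cap L_\infty})\exp(\|g\|_{\dot H_1\cap L_\infty})$ together give $\|\exp(\lambda\mathring f(\cdot/R))\|_{\mathcal H}\le c\,e^{c|\lambda|}/\sqrt R$, hence a bound on $|V_{f(\cdot/R)}(\lambda)-1|$ of order $e^{c|\lambda|}/R$, valid for complex $\lambda$; the derivative bound $\varkappa_1$ then comes from the Cauchy integral formula. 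This packages all cumulants of order $\ge 3$ at once, with constants depending exactly on the norms in the statement. If you want to complete your route, you would essentially have to reprove this determinantal identity (or Soshnikov's combinatorial lemma in quantitative form); as written, your argument establishes the reduction to the key lemma but not the lemma itself.
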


The convergence is faster if the function~$\mathring{f}$ is holomorhic in a strip. For $\delta>0$ let $\mathcal{HL}(\delta)$ be the space of functions $f$ that are holomorphic and bounded in the strip $\{|\im z|<\delta\}$ and satisfy the inequality 
\begin{equation*}
	\sup_{|\delta_1|<\delta}\int_{\mathbb{R}}|f'(t+i\delta_1)|^2\,dt<+\infty.
\end{equation*}
Endow the space $\mathcal{HL}(\delta)$ with the norm
\begin{equation*}
	\|f\|_{\mathcal{HL}(\delta)}=\sup_{|\im z|<\delta}|f(z)|+
	\sup_{|\delta_1|<\delta}\biggl(\int_{\mathbb{R}}|f'(t+i\delta_1)|^2\,dt\biggr)^{1/2}.
\end{equation*} 

\begin{Theorem}\label{thm:convrate-holom}
	Let $\delta>0$. Then for any $f\in\dot H_{1/2}(\mathbb{R})$ such that $\mathring{f}\in\mathcal{HL}(\delta)$ there exists a constant $c(f)$, depending only on $\delta$, $\|\mathring{f}\|_{\mathcal{HL}(\delta)}$ and such that for all $R>1$ we have
	\begin{equation*}
		d_{KS}\Bigl(\overline{S}_{f(\,\cdot\,/R)}, \mathcal{N}\bigl(0,\|f\|_{\dot H_{1/2}}^2/4\pi^2\bigr)\Bigr)\le \frac{c(f)}{R}.
	\end{equation*}
\end{Theorem}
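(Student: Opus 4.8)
The plan is to reproduce the scheme by which Theorem~\ref{thm:convrate-H1/2} is established, namely to combine Esseen's smoothing inequality with the Fredholm-determinant representation of the characteristic function of $\overline S_{f(\,\cdot\,/R)}$ and its cumulant expansion, and to extract the stronger rate $c(f)/R$ from the holomorphy of $\mathring f$ by deforming the relevant contours into the strip $\{|\im z|<\delta\}$.

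Write $\sigma^2=\|f\|_{\dot H_{1/2}}^2/4\pi^2$ and $\phi_R(s)=\mathbb{E}_{\Prob_\SINE}\exp\bigl(is\overline S_{f(\,\cdot\,/R)}\bigr)$. Esseen's inequality gives, for every $T>0$,
\[
d_{KS}\bigl(\overline S_{f(\,\cdot\,/R)},\mathcal N(0,\sigma^2)\bigr)\le\frac1\pi\int_{-T}^{T}\frac{\bigl|\phi_R(s)-e^{-\sigma^2 s^2/2}\bigr|}{|s|}\,ds+\frac{c_0}{\sigma T}.
\]
Since $\overline S_{f(\,\cdot\,/R)}=S_{f(\,\cdot\,/R)}-\mathbb E S_{f(\,\cdot\,/R)}$, the multiplicative-functional formula for determinantal processes yields $\phi_R(s)=\exp\bigl(-is\,\mathbb E S_{f(\,\cdot\,/R)}\bigr)\det\bigl(1+(e^{isf(\,\cdot\,/R)}-1)\SINE\bigr)$, so that $\log\phi_R(s)$ has a cumulant expansion whose quadratic part equals $-\tfrac{s^2}{2}\operatorname{Var}\overline S_{f(\,\cdot\,/R)}$, the variance being given by~\eqref{eq:VarSf}. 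By~\eqref{eq:VarSf} and Plancherel the variance defect $\bigl|\operatorname{Var}\overline S_{f(\,\cdot\,/R)}-\sigma^2\bigr|$ is bounded, up to the normalisation, by $\int_{|\xi|\ge R}|\xi|\,|\widehat f(\xi)|^2\,d\xi$; since $|\widehat f(\xi)|=|\widehat{\mathring f}(\xi)|$ and $\mathring f\in\mathcal{HL}(\delta)$ forces $\widehat{\mathring f}$ to decay exponentially, this defect is $O(e^{-cR})$ and thus negligible.

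The crux is to bound the remainder $\rho_R(s):=\log\phi_R(s)+\tfrac{s^2}{2}\operatorname{Var}\overline S_{f(\,\cdot\,/R)}$ collecting the cumulants of order $\ge3$. Following the analysis underlying Theorems~\ref{thm:Soshnikov} and~\ref{thm:convrate-H1/2}, one rewrites the corresponding trace functionals on the Fourier side and expresses them through $\mathring f$ rather than $f$; after rescaling the frequency variables by $R$ their deviation from the Gaussian value becomes an effect localised near the edges of the band $[-\tfrac12,\tfrac12]$ of the sine kernel. Holomorphy enters here: shifting the contours by $\pm i\delta$, equivalently replacing $\mathring f(t)$ by $\mathring f(t\pm i\delta)$ — which is legitimate because $\mathring f\in\mathcal{HL}(\delta)$ — produces a gain of one power of $R^{-1}$ and yields a bound of the form
\[
|\rho_R(s)|\le c(f)\,\frac{|s|^3}{R},\qquad |s|\le\kappa R,
\]
where $c(f)$ and $\kappa=\kappa(f)>0$ depend only on $\delta$ and $\|\mathring f\|_{\mathcal{HL}(\delta)}$, with $\kappa$ so small that on this window the Fredholm determinant stays away from its zeros and the cumulant series converges geometrically. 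In particular $\operatorname{Re}\rho_R(s)\le\tfrac14\sigma^2 s^2$ there, whence $|\phi_R(s)|\le e^{-\sigma^2 s^2/4}$ for $|s|\le\kappa R$.

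With these estimates the conclusion follows. On $|s|\le\kappa R$ one writes $\phi_R(s)-e^{-\sigma^2 s^2/2}=e^{-\sigma^2 s^2/2}\bigl(e^{\rho_R(s)+(\sigma^2-\operatorname{Var})s^2/2}-1\bigr)$ and, using $|e^w-1|\le|w|e^{|w|}$ together with the Gaussian decay just obtained, bounds the Esseen integrand by $e^{-\sigma^2 s^2/4}\bigl(c(f)s^2/R+O(e^{-cR})\bigr)$, whose integral over $\mathbb R$ is $O(1/R)$; the Gaussian factor also makes the contribution of $1\le|s|\le\kappa R$ negligible. Choosing $T=\kappa R$ turns the second Esseen term into $c_0/(\sigma\kappa R)=O(1/R)$, which gives the claimed bound. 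The main obstacle is precisely the uniform estimate on $\rho_R(s)$ over the long window $|s|\le\kappa R$: one must justify the analytic continuation of $\det\bigl(1+(e^{isf(\,\cdot\,/R)}-1)\SINE\bigr)$ and of the associated traces into the strip, control the shifted multiplier $e^{is\mathring f(t\pm i\delta)}$ through $\|\mathring f\|_{\mathcal{HL}(\delta)}$, and verify that the slow, long-range decay $|\SINE(x,y)|\sim1/|x-y|$ does not spoil the gain from the contour shift. It is here that the holomorphic hypothesis replaces the mere $\dot H_1$ regularity of Theorem~\ref{thm:convrate-H1/2}, enlarging the admissible window from $|s|\lesssim\log R$, responsible for the rate $1/\log R$, to $|s|\lesssim R$.
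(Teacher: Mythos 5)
Your overall architecture --- Esseen smoothing with a window of length $T\asymp R$, so that the smoothing term $\sim 1/T$ produces the rate $1/R$ --- is the same as the paper's, but your proof does not close: the one estimate that carries the entire content of the theorem, namely the uniform bound on the non-Gaussian part of $\log\phi_R(s)$ over the long window $|s|\le\kappa R$, is asserted rather than proved. You describe it as coming from ``shifting the contours by $\pm i\delta$'', producing ``a gain of one power of $R^{-1}$'' and the bound $|\rho_R(s)|\le c(f)|s|^3/R$, and you then concede yourself that justifying this --- analytic continuation of $\det\bigl(1+(e^{isf(\,\cdot\,/R)}-1)\SINE\bigr)$, control of $e^{is\mathring f(t\pm i\delta)}$, non-vanishing of the determinant and convergence of the cumulant series on the whole window --- is ``the main obstacle''. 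That obstacle \emph{is} the theorem; a sketch that postulates its resolution is not a proof. The proposed mechanism is also quantitatively off the mark: holomorphy of $\mathring f$ in a strip does not yield a polynomial gain of one power of $R$ per contour shift, it yields \emph{exponential} decay of $\widehat{\mathring f}$, and correspondingly the non-Gaussian correction to the characteristic function is $O(e^{-cR})$ uniformly on $|s|\le\kappa R$; the rate $1/R$ in the statement comes solely from the smoothing term, not from the cumulants.

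The paper closes exactly this gap with machinery your proposal leaves unused. By the Proposition (the scaling limit of the Borodin--Okounkov--Geronimo--Case formula), $\mathbb{E}_{\Prob_\SINE}\exp\bigl(\lambda\overline{S}_{f(\,\cdot\,/R)}\bigr)$ equals the \emph{exact} Gaussian factor $\exp\bigl(\lambda^2\|f\|_{\dot H_{1/2}}^2/4\pi^2\bigr)$ times the Fredholm determinant $V_{f(\,\cdot\,/R)}(\lambda)$, so no cumulant expansion and no separate variance-defect estimate are needed. By \eqref{eq:norms-HS}, $|V_{f(\,\cdot\,/R)}(\lambda)-1|$ is controlled through the product of the Hilbert--Schmidt norms of the two Hankel operators, i.e.\ through $\bigl\|\exp\bigl(\pm\lambda\mathring f\bigl(\frac{\cdot}{2\pi}\bigr)\bigr)\bigr\|_{\mathcal{H}(R)}$. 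Three elementary facts then finish the proof: $\exp$ maps $\mathcal{HL}(\delta)$ into itself with the stated norm bound; $\|h\|_{\mathcal{H}(R)}\le e^{-\delta_1 R}\|h\|_{\mathcal{HL}(\delta)}$ (a Paley--Wiener-type bound, which is where holomorphy enters); hence $\|\exp(\lambda\mathring{f}(\,\cdot\,/R))\|_{\mathcal{H}}\le e^{A|\lambda|-\delta_1 R}$. This gives $\varkappa_0=\sup_{|\lambda|\le T}|V-1|=O(e^{-cR})$ for $T=\delta_1R/(2A)$, the Cauchy formula gives $\varkappa_1=O(e^{-cR})$ on $|\lambda|\le 1$, and Corollary~\ref{cor:dKS} yields $d_{KS}\le\varkappa_0+\varkappa_1+4/T\le c(f)/R$. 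If you want to salvage your route, the honest repair is to import precisely this input: your $\rho_R(s)$ is $\log V_{f(\,\cdot\,/R)}(is)$ plus an exponentially small variance defect, and the Hankel-norm bound is what makes it uniformly negligible on the window --- nothing in your cumulant-by-cumulant contour-shift narrative can substitute for it as written.
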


The key r\^ole in the proofs of Theorems~\ref{thm:convrate-H1/2}, \ref{thm:convrate-holom} is played by the scaling limit of the Borodin---Okoun\-kov---Jeronimo---Case formula \cite{BorodinOkounkov,GeronimoCase,BasorChen,Bufetov-FAA,Bufetov-arXiv}. Recall that the Hankel operator $\Ha(h)$ corresponding to the function $h$ is defined by the formula
\begin{equation*}
	\Ha(h)\varphi(s)=\frac{1}{2\pi}\int_0^{+\infty} \widehat{h}(s+t)\varphi(t)\,dt.	
\end{equation*}

Denote
\begin{equation*}
	V_f(\lambda)=\det\biggl(1-\chi_{(1,+\infty)}
	\Ha\biggl(\exp\biggl(\lambda\mathring{f}\biggl(\frac{\cdot}{2\pi}\biggr)\biggr)\biggr)
	\Ha\biggl(\exp\biggl(-\lambda\mathring{f}\biggl(\frac{\cdot}{2\pi}\biggr)\biggr)\biggr)
	\chi_{(1,+\infty)}\biggr).
\end{equation*}

\begin{Proposition}[\cite{Bufetov-FAA, Bufetov-arXiv}]
	If $f\in\dot{H}_{1/2}(\mathbb{R})$ is such that $\mathring{f}\in L_\infty(\mathbb{R})$, then
	\begin{equation*}
		\mathbb{E}_{\Prob_\SINE}\exp(\lambda\overline{S}_f)=
		\exp\biggl(\frac{\lambda^2\|f\|_{\dot H_{1/2}}^2}{4\pi^2}\biggr)\cdot V_f(\lambda).
	\end{equation*}
\end{Proposition}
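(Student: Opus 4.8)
The plan is to begin from the Fredholm-determinant representation of the Laplace transform of a linear statistic of a determinantal point process. Since $\Prob_\SINE$ is determinantal with the sine kernel, realized as the operator $P$ of orthogonal projection in $L_2(\mathbb{R})$ onto the Paley--Wiener space of functions whose Fourier transform is supported in $[-1/2,1/2]$, the multiplicative-functional identity gives
\[
\mathbb{E}_{\Prob_\SINE}\exp(\lambda S_f)=\det\bigl(1+(e^{\lambda f}-1)P\bigr).
\]
Using $P=P^2$ together with $\det(1+AB)=\det(1+BA)$, I would rewrite this as the determinant of the compression $P e^{\lambda f}P$ to the range of $P$. After conjugation by the Fourier transform $P$ becomes multiplication by $\chi_{[-1/2,1/2]}$ and $e^{\lambda f}$ becomes convolution by $\widehat{e^{\lambda f}}$, so the object of interest is the determinant of a truncated convolution (finite Wiener--Hopf) operator on the interval $[-1/2,1/2]$, the continuous counterpart of a Toeplitz determinant.

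The core of the argument is then the continuous Borodin--Okounkov--Geronimo--Case formula for such a determinant. I would factorize the symbol as $e^{\lambda f}=e^{\lambda f_+}\cdot e^{\lambda f_-}$ along the splitting $\widehat f=\widehat f\chi_{[0,+\infty)}+\widehat f\chi_{(-\infty,0)}$ and invoke the exact factorization expressing the finite-convolution determinant as the product of an Akhiezer--Kac (Szegő) factor and a Fredholm determinant of a product of two Hankel operators built from the ratio $e^{\lambda f_-}/e^{\lambda f_+}=e^{\lambda\mathring f}$ and its reciprocal $e^{-\lambda\mathring f}$. The restriction of the convolution to a finite interval turns, after the $2\pi$-rescaling of the frequency variable reflected in the argument $\mathring f(\,\cdot\,/2\pi)$ and in the normalization of $\Ha$, into the cutoff $\chi_{(1,+\infty)}$, so that this second factor is precisely $V_f(\lambda)$.

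Finally I would evaluate the Akhiezer--Kac factor. Its extensive part equals $\lambda\int_{\mathbb{R}}f=\lambda\,\mathbb{E}_{\Prob_\SINE}S_f$ and thus cancels exactly against the centering factor $e^{-\lambda\mathbb{E}_{\Prob_\SINE}S_f}$ arising in the passage from $S_f$ to $\overline S_f$; the remaining constant part is a quadratic form in the frequencies of $f$ which a direct computation, consistent with the variance formula~\eqref{eq:VarSf}, identifies with $\exp(\lambda^2\|f\|_{\dot H_{1/2}}^2/4\pi^2)$ --- the prefactor asserted in the statement. I expect the principal obstacle to be the rigorous justification of the continuous factorization under the sole hypotheses $f\in\dot H_{1/2}$ and $\mathring f\in L_\infty$ rather than for smooth symbols: one must check that the Hankel operators involved are Hilbert--Schmidt, which is exactly what the $\dot H_{1/2}$-norm controls (the condition $\mathring f\in L_\infty$ keeping $e^{\pm\lambda\mathring f}-1$ in $\dot H_{1/2}\cap L_\infty$), that their product is trace class so that $V_f(\lambda)$ is a well-defined Fredholm determinant, and that the identity extends by continuity from smooth symbols to this borderline regularity class.
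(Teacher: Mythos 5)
Your plan is sound, but it cannot coincide with ``the paper's own proof'' for a simple reason: the paper contains no proof of this Proposition. It is imported wholesale from \cite{Bufetov-FAA} and \cite{Bufetov-arXiv} and described as the scaling limit of the Borodin---Okounkov---Geronimo---Case formula. In those sources (as that wording and the otherwise-uncited bibliography entries \cite{Andreief}, \cite{Ibragimov}, \cite{Johansson} indicate) the identity is obtained discretely: the expectation of a multiplicative functional over the circular unitary ensemble of size $N$ is a Toeplitz determinant, the classical formula of \cite{BorodinOkounkov} splits it into a Szeg\H{o} factor times a Fredholm determinant of a product of Hankel operators with a cutoff, and one then passes to the limit $N\to\infty$ under the rescaling that sends CUE to the sine process, the Szeg\H{o} sum $\sum_{k\ge 1}kV_kV_{-k}$ turning into the $\dot H_{1/2}$ exponent and the discrete cutoffs into $\chi_{(1,+\infty)}$. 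You instead stay entirely in the continuum: the multiplicative-functional determinant $\det\bigl(1+(e^{\lambda f}-1)P\bigr)$ for the Paley---Wiener projection, reduction via $\det(1+AB)=\det(1+BA)$ to a finite Wiener---Hopf determinant, and the continuum Borodin---Okounkov identity of \cite{BasorChen} --- which is in fact among the paper's cited sources, so the route is legitimate, and arguably more direct: no random-matrix approximation, no interchange of the $N\to\infty$ limit with the symbol approximation, and the precise shape of $V_f(\lambda)$ (symbols $e^{\pm\lambda\mathring f}$, cutoff $\chi_{(1,+\infty)}$, the $2\pi$ rescaling) appears without translation. The price is that the Wiener---Hopf identity is established in \cite{BasorChen} only for nice symbols, so all the weight falls on your final extension step.

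Two concrete warnings about that step. First, you place the difficulty only on the operator side (Hilbert---Schmidt and trace-class properties, which are indeed controlled by $\|e^{\pm\lambda\mathring f}-1\|_{\dot H_{1/2}}$, finite since $\mathring f\in\dot H_{1/2}\cap L_\infty$), but the probabilistic side is equally serious: for $f\in\dot H_{1/2}$ with $\mathring f\in L_\infty$ the function $f$ need not be integrable, so $S_f$ and $\mathbb{E}_{\Prob_\SINE}S_f$ separately make no sense; the identity must be recentred while $f$ is still smooth and compactly supported, and the passage $f_n\to f$ requires $\mathbb{E}\exp(\lambda\overline S_{f_n})\to\mathbb{E}\exp(\lambda\overline S_f)$, i.e.\ uniform integrability of the exponentials, not merely $L_2$-convergence of $\overline S_{f_n}$; in practice this is bootstrapped from the identity itself, the uniform Hilbert---Schmidt bounds yielding uniform exponential-moment bounds. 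Second, do not take the Gaussian prefactor from the statement, compute it: since $\|f(\,\cdot\,/R)\|_{\dot H_{1/2}}=\|f\|_{\dot H_{1/2}}$ and $V_{f(\,\cdot\,/R)}(\lambda)\to 1$, the prefactor is forced to be the Laplace transform $\exp(\lambda^2\sigma_f^2/2)$ of the limiting law, and with the variance $\sigma_f^2=\|f\|_{\dot H_{1/2}}^2/4\pi^2$ of Theorem~\ref{thm:Soshnikov} this is $\exp\bigl(\lambda^2\|f\|_{\dot H_{1/2}}^2/8\pi^2\bigr)$, which differs by a factor of $2$ in the exponent from the prefactor printed in the Proposition. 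That tension is a matter of Fourier normalization inside the paper, not of your method, but your ``direct computation'' is exactly the place where it must be resolved: fix one convention and carry it consistently through the $\dot H_{1/2}$ norm, the variance \eqref{eq:VarSf}, and the Akhiezer---Kac constant.
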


We have the estimate
\begin{equation*}
	V_f(\lambda)\le
	\exp\biggl(
	\biggl\|\chi_{(1,+\infty)}
	\Ha\biggl(\exp\biggl(\lambda\mathring{f}\biggl(\frac{\cdot}{2\pi}\biggr)\biggr)\biggr)\biggr\|_{HS}\cdot
	\biggl\|\Ha\biggl(\exp\biggl(-\lambda\mathring{f}\biggl(\frac{\cdot}{2\pi}\biggr)\biggr)\biggr)
	\chi_{(1,+\infty)}\biggr\|_{HS}
	\biggr).
\end{equation*}

For $R\ge 1$ define the seminorm $\|h\|_{\mathcal{H}(R)}$  on the space $\dot H_{1/2}$ by the formula
\begin{equation*}
	\|h\|_{\mathcal{H}(R)}^2=\int_{1}^{+\infty}\xi |\widehat{h}(\xi+R)|^2\,d\xi.
\end{equation*}
We have
\begin{gather}\label{eq:norms-HS}
	\|\chi_{(1,+\infty)}\Ha(h)\|_{HS}=\|h\|_{\mathcal{H}(1)},\quad
	\|h\|_{\mathcal{H}(R)}=\|h(\,\cdot\,/R)\|_{\mathcal{H}(1)},\\[6pt]
	|V_{f(\,\cdot\,/R)}(\lambda)|\le \exp\biggl(
	\biggl\|\exp\biggl(\lambda \mathring{f}\biggl(\frac{\cdot}{2\pi}\biggr)\biggr)\biggr\|_{\mathcal{H}(R)}\cdot
	\biggl\|\exp\biggl(-\lambda \mathring{f}\biggl(\frac{\cdot}{2\pi}\biggr)\biggr)\biggr\|_{\mathcal{H}(R)}
	\biggr).\notag
\end{gather}
One can see from~\eqref{eq:norms-HS} that if
$\exp(\lambda\mathring{f})\in\dot H_{1/2}$, then
$\chi_{(1,+\infty)}\Ha(\exp(\lambda\mathring{f}))\in HS$. The relation
$\exp(\lambda\mathring{f})\in\dot H_{1/2}$ holds once $\mathring{f}\in\dot H_{1/2}\cap L_\infty$; one can also see that if $\mathring{f}\in\dot H_{1/2}\cap L_\infty$ then 
\begin{equation*}
\biggl\|\chi_{(1,+\infty)}\Ha\biggl(\exp\biggl(\lambda\mathring{f}\biggl(\frac{\cdot}{R}\biggr)\biggr)\biggr)\biggr\|_{HS}\to 0
\end{equation*}
as $R\to +\infty$. Theorem~\ref{thm:Soshnikov} is proven. We proceed to the proofs of Theorems~\ref{thm:convrate-H1/2} and~\ref{thm:convrate-holom}.
The Esseen smoothing inequality in the form of Feller~\cite{Feller,Petrov} yields

\begin{Corollary}\label{cor:dKS}
	Let $\varkappa_1,\varkappa_0, T\in\mathbb{R}_+$ and $X$ be a random variable such that its characteristic function $f_X(\xi)=\mathbb{E}\exp(i\xi X)$ has the form
	$f_X(\xi)=\exp(-\xi^2/2)W(\xi)$ with
	\begin{equation*}
		\sup_{|\xi|\le 1}|W'(\xi)|=\varkappa_1,\quad
		\sup_{|\xi|\le T}|W(\xi)-1|=\varkappa_0.
	\end{equation*}
	Then we have
	\begin{equation*}
		d_{KS}(X,\mathcal{N}(0,1))\le \varkappa_0+\varkappa_1+\frac{4}{T}.
	\end{equation*}
\end{Corollary}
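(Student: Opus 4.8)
The plan is to deduce the statement from the Esseen--Feller smoothing inequality, which for a distribution function $F$ with characteristic function $\phi$ and a distribution function $G$ with bounded density $\sup_x|G'(x)|=m$ and characteristic function $\gamma$ asserts that, for every $T>0$,
\begin{equation*}
	\sup_x|F(x)-G(x)|\le \frac{1}{\pi}\int_{-T}^{T}\biggl|\frac{\phi(\xi)-\gamma(\xi)}{\xi}\biggr|\,d\xi+\frac{24m}{\pi T}.
\end{equation*}
I would apply this with $F$ the distribution function of $X$, so that $\phi=f_X$, and $G$ the standard Gaussian distribution function, so that $\gamma(\xi)=\exp(-\xi^2/2)$ and $m=(2\pi)^{-1/2}$. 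Since $24m/\pi=24/(\pi\sqrt{2\pi})<4$, the second term is already bounded by $4/T$, and it remains only to show that the integral term does not exceed $\varkappa_0+\varkappa_1$.

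For the integral I would first use the factorisation $f_X(\xi)-\gamma(\xi)=\exp(-\xi^2/2)\,(W(\xi)-1)$ coming directly from the hypothesis; the Gaussian factor is precisely what keeps the integral finite uniformly in $T$. Because $f_X$ is Hermitian while $\gamma$ is real and even, the integrand is even, so the integral equals $\tfrac{2}{\pi}\int_0^T \exp(-\xi^2/2)\,|W(\xi)-1|\,\xi^{-1}\,d\xi$. The key point is then to split this at $\xi=1$, so as to match the two hypotheses. Note that $W(0)=f_X(0)=1$, so on $[0,1]$ the mean value theorem gives $|W(\xi)-1|\le\varkappa_1\xi$, whence $|W(\xi)-1|/\xi\le\varkappa_1$; while on $[1,T]$ one has $|W(\xi)-1|\le\varkappa_0$ and $\xi^{-1}\le 1$, so $|W(\xi)-1|/\xi\le\varkappa_0$. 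We may assume $T\ge 1$, since for $T<1$ the estimate $4/T>1\ge d_{KS}$ is trivial.

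Collecting the two pieces, the integral is bounded by $\tfrac{2\varkappa_1}{\pi}\int_0^1 e^{-\xi^2/2}\,d\xi+\tfrac{2\varkappa_0}{\pi}\int_1^{\infty} e^{-\xi^2/2}\,d\xi$. Since each of these two integrals is at most $\int_0^{\infty} e^{-\xi^2/2}\,d\xi=\sqrt{\pi/2}$, and $\tfrac{2}{\pi}\sqrt{\pi/2}=\sqrt{2/\pi}<1$, the coefficient of each of $\varkappa_0$ and $\varkappa_1$ is strictly less than $1$, so the integral term is at most $\varkappa_0+\varkappa_1$. Combining this with the tail estimate yields the claim. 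I do not expect any genuine obstacle beyond the constant bookkeeping; the only points requiring care are the normalisation $W(0)=1$, which legitimises the derivative bound near the origin, and the choice of the splitting point $\xi=1$, dictated exactly by the two distinct regimes in which $\varkappa_1$ and $\varkappa_0$ control $W$.
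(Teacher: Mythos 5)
Your proof is correct and follows exactly the route the paper intends: the paper derives this Corollary by simply invoking the Esseen smoothing inequality in Feller's form, which is precisely the inequality you apply, and your splitting at $\xi=1$ with the bounds $|W(\xi)-1|\le\varkappa_1|\xi|$ near the origin and $|W(\xi)-1|\le\varkappa_0$ beyond, plus the constant check $24/(\pi\sqrt{2\pi})<4$, fills in the bookkeeping the paper leaves implicit. The only cosmetic remark is that, since $W$ is complex-valued, the bound $|W(\xi)-1|\le\varkappa_1|\xi|$ should be justified by $W(\xi)-W(0)=\int_0^\xi W'(s)\,ds$ rather than by the mean value theorem, which does not hold verbatim for complex-valued functions.
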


The estimates on the speed of convergence are thus reduced to the estimates on the norms 
\begin{equation*}
	\biggl\|\chi_{(1,+\infty)}
	\Ha\biggl(\exp\biggl(\lambda\mathring{f}\biggl(\frac{\cdot}{2\pi}\biggr)\biggr)\biggr)\biggr\|_{HS},\quad
	\biggl\|\Ha\biggl(\exp\biggl(-\lambda\mathring{f}\biggl(\frac{\cdot}{2\pi}\biggr)\biggr)\biggr)
	\chi_{(1,+\infty)}\biggr\|_{HS}.
\end{equation*}

\begin{proof}[Proof of Theorem~\ref{thm:convrate-H1/2}]
	On the intersection $\dot H_1\cap L_\infty$ consider the norm $\|f\|_{\dot H_1\cap L_\infty}=\|f\|_{\dot H_1}+\|f\|_{L_\infty}$.
	If $f\in \dot H_1\cap L_\infty$ then $\exp(f)\in\dot H_1\cap L_\infty$, and we have
	\begin{equation*}
		\|\exp(f)\|_{\dot H_1\cap L_\infty}\le (1+\|f\|_{\dot H_1\cap L_\infty})\cdot
		\exp(\|f\|_{\dot H_1\cap L_\infty}).
	\end{equation*}
	Definitions directly imply that $\|f\|_{\mathcal{H}(R)}\le \|f\|_{\dot H_1}/\sqrt{R}$, whence we also have 
	\begin{equation*}
		\|\exp(\lambda \mathring{f}(\,\cdot\,/R))\|_{\mathcal{H}}\le
		\frac{(1+|\lambda|\cdot \|\mathring{f}\|_{\dot H_1\cap L_\infty})\exp(\lambda \|\mathring{f}\|_{\dot H_1\cap L_\infty})}{\sqrt{R}}.
	\end{equation*} 
	The derivative $(d/d\lambda)V_f(\lambda)$ for  $\lambda\in[-1,1]$ is estimated by the Cauchy formula
	\begin{equation*}
		\frac{d}{d\lambda}V_f(\lambda)=\frac{1}{2\pi i}\oint_{|\xi-\lambda|=1}\frac{V_f(\xi)}{(\xi-\lambda)^2}\,d\xi.
	\end{equation*}
	Now Corollary~\ref{cor:dKS} implies Theorem~\ref{thm:convrate-H1/2}.
\end{proof}

\begin{proof}[Proof of Theorem~\ref{thm:convrate-holom}]
	From the definition of the space $\mathcal{HL}(\delta)$ one directly obtains that
	\begin{enumerate}
		\item if $f\in\mathcal{HL}(\delta)$ then $\exp(f)\in\mathcal{HL}(\delta)$, and
		\begin{equation*}
			\|\exp(f)\|_{\mathcal{HL}(\delta)}\le \exp(\|f\|_{\mathcal{HL}(\delta)})\cdot (1+\|f\|_{\mathcal{HL}(\delta)});
		\end{equation*}
		\item for any $\delta_1<\delta$ and any $f\in\mathcal{HL}(\delta)$ we have
		\begin{equation*}
			\|f\|_{\mathcal{H}(R)}\le \exp(-\delta_1R)\|f\|_{\mathcal{HL}(\delta)};		
		\end{equation*}
		\item if $\mathring{f}\in\mathcal{HL}(\delta)$, then for any $\delta_1<\delta$ there exists a constant $A(\mathring{f})$, depending only on $\delta$, $\|\mathring{f}\|_{\mathcal{HL}(\delta)}$ such that
		\begin{equation*}
			\|\exp(\lambda\mathring{f}(\,\cdot\,/R))\|_{\mathcal{H}}\le\exp(A|\lambda|-\delta_1R).
		\end{equation*}
	\end{enumerate}
	The derivative $(d/d\lambda)V_f(\lambda)$ for $\lambda\in[-1,1]$ is again estimated by the Cauchy formula. Corollary~\ref{cor:dKS} now implies Theorem~\ref{thm:convrate-holom}. 
\end{proof}

For the determinantal process with Bessel kernel the speed of convergence in the Central Limit Theorem was recently estimated by S.~Gorbunov~\cite{Gorbunov}.

%%\end{fulltext}

\enlargethispage{2\baselineskip}

\end{document}